\documentclass[11pt]{article}
%
%
\usepackage{amsmath,amsthm,amsfonts,amssymb,url,graphicx,xcolor,authblk,longtable}
\usepackage{tikz}
\usepackage[title]{appendix}
\usepackage{enumitem}
\usetikzlibrary{calc}
\usetikzlibrary{arrows,decorations.markings}

\makeatletter
\g@addto@macro\bfseries{\boldmath}
\makeatother

\newcommand{\D}[3]{\ensuremath{\mathcal{D}_{#1}#2#3}}

\newtheorem{theorem}{Theorem}[section]
\newtheorem{lemma}[theorem]{Lemma}
\newtheorem{corollary}[theorem]{Corollary}

\newtheorem{example}{Example}[section]

\newcommand{\A}{\ensuremath{\mathcal{A}}} 
\newcommand{\B}{\ensuremath{\mathcal{B}}} 
\newcommand{\C}{\ensuremath{\mathcal{C}}} 
 
\newcommand{\K}{\ensuremath{K_{\mathsf{DTS}}}}
\newcommand{\zed}{\ensuremath{\mathbb{Z}}}

\definecolor{red}{RGB}{220,20,60}

\newcommand{\STS}{\ensuremath{\mathsf{STS}}} 
\newcommand{\DTS}{\ensuremath{\mathsf{DTS}}} 
\newcommand{\TTS}{\ensuremath{\mathsf{TTS}}} 
\newcommand{\SAT}{\textsf{2-SAT} }

\title{Block-avoiding point sequencings of directed triple systems}
\author[1]{Donald L.\ Kreher}
\author[2]{Douglas R.\ Stinson%
\thanks{D.R.\ Stinson's research is supported by  NSERC discovery grant RGPIN-03882.}}
\author[3]{Shannon Veitch}
\affil[1]{Department of Mathematical Sciences, 
Michigan Technological University 
Houghton, MI 49931,  
U.S.A.}
\affil[2]{David R.\ Cheriton School of Computer Science, University of Waterloo,
Waterloo, Ontario, N2L 3G1, Canada}
\affil[3]{Department of Combinatorics and Optimization, University of Waterloo,
Waterloo, Ontario, N2L 3G1, Canada}


\begin{document}
\maketitle

\begin{abstract}
A  \emph{directed triple system of order $v$} (or, \DTS$(v)$) is decomposition of the complete directed graph $\vec{K_v}$
into transitive triples. A \emph{$v$-good sequencing}  of a \DTS$(v)$  is a permutation of the points of the design, say $[x_1 \; \cdots \; x_v]$, such that, for every triple $(x,y,z)$ in the design,
it is \emph{not} the case that $x = x_i$, $y = x_j$ and $z = x_k$ with $i < j < k$.
We prove that there exists a \DTS$(v)$ having a $v$-good sequencing for all positive integers $v \equiv 0,1 \bmod {3}$. Further, for all positive integers $v \equiv 0,1 \bmod {3}$, $v \geq 7$, we prove that there is a \DTS$(v)$ that does not have  a $v$-good sequencing. We also derive some computational results concerning  $v$-good sequencings of all the nonisomorphic \DTS$(v)$ for $v \leq 7$.
\end{abstract}

\section{Introduction}
\label{intro.sec}

A \emph{Steiner triple system of order $v$} is a pair $(X, \B)$, where 
$X$ is a set of $v$ \emph{points} and $\B$ is a set of 3-subsets of $X$ (called 
\emph{blocks}), such that every pair of points occurs in exactly one block.
We will abbreviate the phrase ``Steiner triple system of order $v$'' to 
\STS$(v)$. It is well-known that an STS$(v)$ contains exactly $v(v-1)/6$ blocks, and an 
STS$(v)$ exists if and only  if $v \equiv 1,3 \bmod 6$. 
The definitive reference for Steiner triple systems is the book \cite{CR} by Colbourn and Rosa.

There are two directed variants of \STS$(v)$, which are known as Mendelsohn triple systems and directed triple systems. 
We study directed triple systems in this paper. First, we define a \emph{transitive triple} to be an ordered 
triple $(x,y,z)$, where $x,y,z$ are distinct. 
This triple contains the directed edges $(x,y)$, $(x,z)$ and $(y,z)$
(we might also write these directed edges as $xy$, $xz$ and $yz$, respectively). 
Thus, the triple $(x,y,z)$ can be thought of as the following directed graph:

\begin{center}
\tikz[scale=0.5,line width=1]{
\coordinate (x) at (210:2.0);
\coordinate (y) at ( 90:0.5);
\coordinate (z) at (330:2.0);
\draw[->,>=stealth] (x)--($(x)!0.5!(y)$);
\draw[->,>=stealth] (y)--($(y)!0.5!(z)$);
\draw[->,>=stealth] (x)--($(x)!0.5!(z)$);
\draw(x)--(y)--(z)--cycle;
\node at ($(0,0)!1.25!(x)$) {$x$};
\node at ($(0,0)!1.90!(y)$) {$y$};
\node at ($(0,0)!1.25!(z)$) {$z$};
\foreach \i in {x,y,z}
{
\draw[fill] (\i) circle[radius=0.1];
}
}
\end{center}

Let $X$ be a set of $v$ points or vertices and let $\vec{K_v}$ denote the complete directed graph on vertex set $X$. This graph has $v(v-1)$ directed edges.
A  \emph{directed triple system of order $v$} is a pair $(X, \B)$, where 
$X$ is a set of $v$ \emph{points} and $\B$ is a set of transitive  triples (or more simply, \emph{triples}) whose elements are members of $X$,
 such that every directed  edge in $\vec{K_v}$ occurs  in exactly one triple in $\B$. (Thus, the triples in a directed triple system fulfill the same role as blocks in  a Steiner triple system.)
 
  We will abbreviate the phrase ``directed triple system of order $v$'' to 
\DTS$(v)$. It is well-known that a \DTS$(v)$ contains exactly $v(v-1)/3$ triples, and a 
\DTS$(v)$ exists if and only  if $v \equiv 0,1 \bmod 3$. Various results on \DTS$(v)$ can  be found in \cite{CR}.
 
The following problem on sequencing points in an \STS$(v)$ was introduced by Kreher and Stinson in \cite{KS2}
and studied further in Stinson and Veitch \cite{SV}.
Suppose $(X, \B)$ is an \STS$(v)$  and let $\ell \geq 3$ be an integer. 
 A \emph{sequencing} of the \STS$(v)$ is a permutation
$\pi = [x_1\; x_2 \;  \cdots \;  x_v]$ of $X$.
A sequencing  $\pi = [x_1\; x_2 \;  \cdots \;  x_v]$ is \emph{$\ell$-good} if no $\ell$ consecutive points in $\pi$ contain a block in $\B$.

Some related but different sequencing problems for \STS$(v)$ are studied in \cite{AKP} and \cite{KS}. Also, for a recent survey paper on this topic, see \cite{Alspach}.

It is obvious that an \STS$(v)$ cannot have a $v$-good sequencing. In fact, it was shown in \cite{SV} that,
if an $\STS(v)$ with $v \geq 7$ has an $\ell$-good sequencing,  then
$\ell < (v+2)/3.$

In this paper, we study the corresponding question for \DTS$(v)$. Let $(X, \B)$ be a \DTS$(v)$.
We first need to define what it means for a sequencing $\pi$ of $(X, \B)$ to ``contain'' a particular transitive triple.
The most natural approach seems to be to regard the sequencing as a total ordering defined on the points in $X$. 
A triple $(x,y,z) \in \B$ is said to be \emph{contained} in $\ell$ consecutive points of the sequencing 
$\pi = [x_1\; x_2 \;  \cdots \;  x_v]$ 
if 
\begin{enumerate}
\item $\{x,y,z\} \subseteq \{x_i, x_{i+1}, \dots , x_{i + \ell - 1}\}$ for some $i$, and 
\item $x < y < z$ in the sequencing.
\end{enumerate}
Then, a sequencing  $\pi$ is \emph{$\ell$-good} if no $\ell$ consecutive points in the sequencing contain a triple in $\B$.

Unlike \STS$(v)$, it is possible that a \DTS$(v)$ has a $v$-good sequencing. Informally, this just means that 
for every triple $(x,y,x)$ in the \DTS$(v)$, the ordering of $x$, $y$ and $z$ in the sequencing is \emph{not}
$x < y < z$. 

\begin{example} 
\label{3.exam}
{\rm Let $X = \{1,2,3\}$ and $\B = \{ (1,2,3), (3,2,1)\}$.
Then $(X,\B)$ is a \DTS$(3)$ and
 $[1\; 3\; 2]$, $[2\; 3\; 1]$, $[2\; 1\; 3]$ and $[3\; 1\; 2]$ are all 3-good sequencings.
 }
\end{example}

\begin{example} 
\label{4.exam}
{\rm Let $X = \{1,2,3,4\}$ and $\B = \{ (1,2,3), (2,1,4), (3,4,2), (4,3,1)\}$.
Then $(X,\B)$ is a \DTS$(4)$ and
 $[1\; 3\; 2\; 4]$ is a  4-good sequencing.
 }
\end{example}

\begin{example} 
\label{6.exam}
{\rm Let $X = \{\infty\} \cup \zed_5$ and $\B = \{ (0, \infty, 4), (0,1,3) \} \bmod 5$.
Then $(X,\B)$ is a \DTS$(6)$ and
 $[\infty\; 0\; 2\; 4\; 3\; 1]$ is a 6-good sequencing.
 }
\end{example}

\subsection{Summary of results}

In Section \ref{constructions.sec}, we use recursive constructions to prove that there exists a \DTS$(v)$ having a $v$-good sequencing for all positive integers $v \equiv 0,1 \bmod {3}$. 
In Section \ref{computation.sec}, we  report some computational results concerning  $v$-good sequencings of all the nonisomorphic \DTS$(v)$ for $v \leq 7$. Perhaps surprisingly, there are precisely four nonisomorphic 
\DTS$(7)$ (out of a total of 2368) that do not have $7$-good sequencings. 
In Section \ref{algorithms.sec}, we investigate a possible algorithmic approach to prove that a given 
\DTS$(v)$ does not have  a $v$-good sequencing. We illustrate by providing  a short  proof that a certain 
\DTS$(7)$ does not have a $7$-good sequencing. We also use the same technique to enable the construction of  \DTS$(v)$ that do not have  $v$-good sequencings for  $v = 9,10,12,13,16$ and $18$.
Then, in Section \ref{nonsequenceable.sec}, we use recursive constructions to prove that there is a \DTS$(v)$ that does not have  a $v$-good sequencing
for all positive integers $v \equiv 0,1 \bmod {3}$, $v \geq 7$.

\section{Constructions}
\label{constructions.sec}

We provide two proofs that there exists a \DTS$(v)$ having  a  $v$-good sequencing for all $v \equiv 0,1 \bmod 3$.
First, we give a PBD proof. Then we present a proof using two well-known recursive ``doubling'' constructions for DTS,
having the form $v \rightarrow 2v+1$ and $v \rightarrow 2v+4$.

\subsection{PBD-closure}

Let  $K$ be a set whose elements are all integers $\geq 2$.
A pair $(X, \B)$ is a \emph{$(v,K)$-pairwise balanced design} 
(or, \emph{$(v,K)$-PBD})
if $X$ is a set of $v$ \emph{points} and $\B$ is a set of subsets of $X$ (called 
\emph{blocks}) such that 
\begin{itemize}
\item every pair of points from $X$ occurs in exactly one block in $\B$, and
\item
$|B| \in K$ for every $B \in \B$.
\end{itemize}
A set $K$, whose elements are all integers $\geq 2$, is \emph{PBD-closed} if $v \in K$
whenever there exists a $(v,K)$-PBD.

Let $\K = \{v \geq 3: \text{there exists a \DTS$(v)$ having a  $v$-good sequencing}\}$.
We show that $\K$ is PBD-closed.

\begin{theorem}
\label{PBDclosed.thm}
$\K$ is PBD-closed.
\end{theorem}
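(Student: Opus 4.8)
The plan is to exploit PBD-closure in the standard way: given a $(v,\K)$-PBD $(X,\A)$, I would place a good-sequenceable \DTS\ on each block and take the union of all the resulting triples. Concretely, for each block $A \in \A$ we have $|A| \in \K$, so there is a \DTS$(|A|)$ admitting an $|A|$-good sequencing; transplanting this design onto the point set $A$ yields a set of transitive triples $\mathcal{D}_A$ on $A$, and I would set $\B = \bigcup_{A \in \A} \mathcal{D}_A$.

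First I would check that $(X,\B)$ is a \DTS$(v)$. Every directed edge $(x,y)$ of $\vec{K_v}$ corresponds to an unordered pair $\{x,y\}$, which lies in exactly one block $A$ of the PBD; since two distinct blocks meet in at most one point, $x$ and $y$ occur together only in $A$, so $(x,y)$ is covered exactly once, namely by the unique triple of $\mathcal{D}_A$ containing that edge. A triple count ($\sum_A |A|(|A|-1)/3 = \tfrac{2}{3}\binom{v}{2} = v(v-1)/3$) confirms the arithmetic, so no edge is missed or repeated.

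The substance of the argument is producing the $v$-good sequencing, and here I would avoid the tempting but flawed route of choosing a good sequencing on each block independently and then trying to merge them into a single order: because the block orders only constrain pairs lying in a common block, the merged relation can contain a directed cycle running through three or more blocks, so no common linear extension need exist. Instead I would reverse the order of operations. Fix an arbitrary total order $x_1 < x_2 < \cdots < x_v$ on $X$ once and for all. For a block $A = \{a_1, \ldots, a_k\}$ with $a_1 < \cdots < a_k$ in this global order, take any $k$-good sequencing $[s_1 \; \cdots \; s_k]$ of a \DTS$(k)$ and relabel it by $s_i \mapsto a_i$. Since goodness is a property of the relative order of the points together with the list of triples, and both transport under relabelling, the transplanted design $\mathcal{D}_A$ then has no triple appearing in increasing order with respect to the induced global order on $A$.

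Finally I would assemble these choices: with every $\mathcal{D}_A$ constructed this way, no triple of $\B$ is increasing in the order $x_1 < \cdots < x_v$, since each such triple lies in some $\mathcal{D}_A$ and is already non-increasing on $A$. Hence $[x_1 \; \cdots \; x_v]$ is a $v$-good sequencing of $(X,\B)$, so $v \in \K$. The one point demanding care is the lemma that relabelling simultaneously preserves the \DTS\ property and goodness; once that is in hand, fixing the global order \emph{before} selecting the block designs is precisely what dissolves the cycle obstruction, and this is the crux that makes the construction go through.
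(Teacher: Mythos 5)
Your proposal is correct and is essentially the paper's own proof: fix the global order first, then relabel a $k$-good-sequenced \DTS$(k)$ onto each block so its sequencing agrees with the global order, and take the union. Your added remarks (the edge-count check and the explanation of why merging independently chosen block sequencings would fail) are sound elaborations of the same argument.
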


\begin{proof}
Suppose $(X, \B)$ is a $(v,\K)$-PBD, where $X = \{1, \dots, v\}$. We will construct a  
\DTS$(v)$ having a $[1\; 2\; \cdots \; v]$ as a $v$-good sequencing.

Let $B \in \B$,  say $B = \{x_1, x_2, \dots, x_k\}$, where
$x_1 < \cdots < x_k$. There is \DTS$(k)$ having a $k$-good sequencing.
Therefore, by relabelling points, there exists a \DTS$(k)$, say $(B, \A_B)$, for which 
$[x_1 \; x_2 \; \cdots \; x_k]$ is a  $k$-good sequencing.

Define 
\[\A = \bigcup _{B \in \B} \A_B.\]
It is straightforward to verify that $(X, \A)$ is a \DTS$(v)$ for which 
$[1 \; 2 \; \cdots \; v]$ is a  $v$-good sequencing.
\end{proof}

\begin{corollary}
There exists a \DTS$(v)$ having  a  $v$-good sequencing if and only if $v \equiv 0,1 \bmod 3$.
\end{corollary}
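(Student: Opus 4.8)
The statement is a biconditional whose two directions are of quite different character. The ``only if'' direction is immediate: as recalled in the introduction, a \DTS$(v)$ exists at all only when $v \equiv 0,1 \bmod 3$, so in particular a \DTS$(v)$ admitting a $v$-good sequencing can exist only for such $v$. All of the content lies in the ``if'' direction, and the plan is to obtain it from Theorem~\ref{PBDclosed.thm} together with a small supply of base cases.

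First I would record the base cases furnished by the three worked examples: Example~\ref{3.exam} gives $3 \in \K$, Example~\ref{4.exam} gives $4 \in \K$, and Example~\ref{6.exam} gives $6 \in \K$, so that $\{3,4,6\} \subseteq \K$. The idea is then to assemble every admissible order out of blocks of sizes $3$, $4$, and $6$. Concretely, I would invoke the standard fact that a $(v,\{3,4,6\})$-PBD exists for every $v \equiv 0,1 \bmod 3$. Since $\{3,4,6\} \subseteq \K$, any such design is in particular a $(v,\K)$-PBD; and because $\K$ is PBD-closed by Theorem~\ref{PBDclosed.thm}, this forces $v \in \K$, i.e.\ there is a \DTS$(v)$ possessing a $v$-good sequencing. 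Combined with the ``only if'' direction, this yields the claimed equivalence for all $v \equiv 0,1 \bmod 3$.

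The one step that requires genuine care is the appeal to the existence of $(v,\{3,4,6\})$-PBDs, and this is where I expect the real substance to sit. The necessary conditions for $K=\{3,4,6\}$ fall out exactly right: with $\gcd\{k-1 : k \in K\} = \gcd(2,3,5) = 1$ and $\gcd\{k(k-1) : k \in K\} = \gcd(6,12,30) = 6$, the divisibility requirement $v(v-1) \equiv 0 \bmod 6$ collapses precisely to $v \equiv 0,1 \bmod 3$. That these conditions are also sufficient is a classical result in design theory, which I would cite from a standard reference rather than reprove. The only subtlety worth flagging is that the block size $6$ is genuinely needed and not merely a convenience: a short counting argument shows there is no $(6,\{3,4\})$-PBD (each point would lie in exactly one block of each size, but then the number of blocks of size $4$ would have to equal $6/4$, which is not an integer), so the order $v=6$ would escape any construction restricted to block sizes $3$ and $4$. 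Example~\ref{6.exam} places $6$ directly in $\K$, so admitting $6$ as a block size closes precisely this gap and lets the argument run uniformly across the entire residue class.
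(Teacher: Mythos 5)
Your proposal is correct and takes essentially the same route as the paper: both establish $3,4,6 \in \K$ from Examples \ref{3.exam}, \ref{4.exam} and \ref{6.exam} and then invoke the PBD-closure of Theorem \ref{PBDclosed.thm} together with known PBD existence results. The only difference is cosmetic --- the paper uses $(v,\{3,4\})$-PBDs for $v \geq 7$ and treats $v = 6$ as a separate base case, whereas you absorb $6$ into the block-size set via $(v,\{3,4,6\})$-PBDs; your counting argument showing no $(6,\{3,4\})$-PBD exists is precisely the reason the paper must special-case $v = 6$.
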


\begin{proof}
We have already noted that $v \equiv 0,1 \bmod 3$ is a necessary condition for existence of a \DTS$(v)$.

We prove sufficiency as follows.
For $v \equiv 0,1 \bmod 3$, $v \geq 7$, there exists a $(v,\{3,4\})$-PBD (see \cite[Table IV.3.23]{CD}).
We know that $3,4 \in \K$ from Examples \ref{3.exam} and \ref{4.exam}.
Then we can apply Theorem \ref{PBDclosed.thm} to show that $v\in \K$.
Finally, $6 \in \K$ from Example \ref{6.exam}.
\end{proof}

\subsection{Doubling constructions}

In this section, we prove the existence of a $v$-good sequencing of a $\DTS(v)$ by using two doubling constructions. The two constructions we use can be found in 
\cite[\S24, Lemma 1.1 and Lemma 1.2]{CR}, for example.

\begin{lemma}
\label{double1.lem}
If there exists a $\DTS(v)$ having a $v$-good sequencing, 
then there exists a $\DTS(2v+1)$ having a $(2v+1)$-good sequencing.
\end{lemma}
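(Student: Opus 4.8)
The plan is to combine a standard doubling construction for directed triple systems (cf.\ \cite[\S24, Lemma~1.1]{CR}) with a deliberate choice to place the good sequencing of the original $\DTS(v)$ at the \emph{end} of the new sequencing. Concretely, let $(V,\B)$ be the given $\DTS(v)$ with $v$-good sequencing $[x_1\;\cdots\;x_v]$, and build a $\DTS(2v+1)$ on $V\cup W$, where $W$ is a set of $v+1$ new points identified with $\zed_{v+1}$. On $V$ I keep the given design $\B$. To cover the remaining edges I decompose $\vec{K_{v+1}}$ on $W$ into the $v$ directed $1$-factors $F_j=\{(k,k+j):k\in\zed_{v+1}\}$, $j=1,\dots,v$, identify the differences $j\in\{1,\dots,v\}$ with the points of $V$, and for each directed edge $(p,q)\in F_j$ add the transitive triple $(p,j,q)$. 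The essential feature of this construction is that in every added triple the original point $j\in V$ occupies the \emph{middle} coordinate, while the two outer coordinates $p,q$ lie in $W$.

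First I would check that $(V\cup W,\A)$, where $\A$ consists of $\B$ together with all the added triples, is indeed a $\DTS(2v+1)$. The triples of $\B$ cover exactly the directed edges within $V$; the added triple $(p,j,q)$ covers the edge $p\to q$ inside $W$ together with the two cross edges $p\to j$ and $j\to q$; and because each $F_j$ is a permutation of $\zed_{v+1}$ and each nonzero difference occurs in exactly one $F_j$, every edge within $W$ and every cross edge between $V$ and $W$ is covered exactly once. A triple count, $v(v-1)/3+v(v+1)=2v(2v+1)/3$, confirms there are no repetitions.

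Next I would define the sequencing of $V\cup W$ by listing the new points first (in any order) and then the original points in their $v$-good order:
\[
\pi=[\,w_1\;w_2\;\cdots\;w_{v+1}\;\;x_1\;x_2\;\cdots\;x_v\,],
\]
where $w_1,\dots,w_{v+1}$ is any ordering of $W$. The verification that $\pi$ is $(2v+1)$-good then splits into two easy cases. A triple of $\B$ involves only points of $V$, whose relative order in $\pi$ is exactly that of the $v$-good sequencing, so it is not increasing. An added triple $(p,j,q)$ has its middle coordinate $j$ in the trailing block $V$ and its outer coordinates $p,q$ in the leading block $W$; hence $j$ is positioned after both $p$ and $q$, so the pattern $p<j<q$ cannot occur and this triple is not increasing either. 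Since every triple of $\A$ is of one of these two types, $\pi$ is $(2v+1)$-good.

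The step I expect to require the most care is the design of the added triples, not the sequencing check. One must arrange simultaneously that (i) the added triples cover every within-$W$ edge and every cross edge exactly once for \emph{all} admissible $v$, of both parities, and (ii) the original point always lands in the middle coordinate, so that the ``new points first'' ordering defeats every added triple uniformly. The difference-based decomposition $F_j$ of $\vec{K_{v+1}}$ secures both at once, and in particular sidesteps the parity restriction that a $1$-factorization of the \emph{undirected} $K_{v+1}$ would impose when $v$ is even. Once these triples are in place, the goodness of $\pi$ is immediate from the two cases above.
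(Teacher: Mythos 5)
Your proposal is correct and is essentially the paper's own proof: your difference classes $F_j$ amount to choosing the cyclic latin square $L(p,q)=q-p \bmod{(v+1)}$, which has constant diagonal, and adding the triples $(p,L(p,q),q)$ is exactly the paper's construction with that specific latin square. The only cosmetic difference is that you list the new points before the old ones instead of after, so each added triple is defeated because its middle (old) point comes \emph{after} both outer points rather than before --- the mirror image of the paper's verification.
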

\begin{proof}
Let $(X = \{1,\dots,v\}, \mathcal{B})$ be a $\DTS(v)$ having a $v$-good sequencing $[1\; 2\; \cdots\; v]$.
Let $L$ be a latin square of order $v+1$ having constant diagonal, whose rows and columns are indexed by the set $Y = \{v+1, \dots, 2v+1\}$ of size $v+1$ and whose off-diagonal symbols are from $X$. Form a set $\mathcal{C}$ of triples as follows: For each $i,j \in Y$, $i \neq j$, let $(i, L(i,j),j) \in \mathcal{C}$. 
Then, $(X \cup Y, \mathcal{B} \cup \mathcal{C})$ is a $\DTS(2v+1)$.

It is not hard to see that $(2v+1)$-good sequencing of this DTS is given by
\[
[1\; 2\; \cdots \; v\; v+1\; \cdots \; 2v+1].
\]
This follows, because 
\begin{enumerate}
\item $[1\; 2\; \cdots\; v]$ is a $v$-good sequencing of the triples in $\mathcal{B}$, and 
\item for each triple  $(i, L(i,j),j) \in \mathcal{C}$, the point $L(i,j)$ occurs in the sequencing before the point $i$.
\end{enumerate}
\end{proof}

\begin{lemma}
\label{double2.lem}
If there exists a $\DTS(v)$ having a $v$-good sequencing, then there exists a $\DTS(2v+4)$ having a $(2v+4)$-good sequencing.
\end{lemma}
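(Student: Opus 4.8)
The plan is to mirror the structure of Lemma~\ref{double1.lem}, adapting the $v \to 2v+1$ doubling to the $v \to 2v+4$ case. I start with a $\DTS(v)$, say $(X = \{1, \dots, v\}, \B)$, having the $v$-good sequencing $[1\; 2\; \cdots\; v]$. I need to adjoin a set $Y$ of $v+4$ new points and construct triples covering (i) all directed edges within $Y$, and (ii) all directed edges between $X$ and $Y$ in both directions. As in the first doubling construction, the edges between $X$ and $Y$ will be handled by a latin-square-type structure on $Y$ whose off-diagonal entries are symbols from $X$: for suitable $i, j \in Y$ I form triples $(i, L(i,j), j)$, so that each such triple places its middle point $L(i,j) \in X$ before both of its outer points $i, j \in Y$ in the concatenated sequencing. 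The key point is that because $|Y| = v+4$ rather than $v+1$, a single latin square of order $v+1$ no longer matches the index set, so the standard construction from \cite[\S24]{CR} instead uses an idempotent-type combinatorial object (a latin square of order $v+4$ on symbol set with a few symbols drawn from $Y$ itself, together with a small fixed $\DTS(4)$ or similar patch) to simultaneously cover the internal edges of $Y$ and the crossing edges.

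First I would recall the precise form of the $v \to 2v+4$ construction from \cite[\S24, Lemma 1.2]{CR}, identifying exactly which triples are ``internal'' to $Y$ and which cross between $X$ and $Y$. The crossing triples will, by construction, be of the form $(i, s, j)$ with $i, j \in Y$ and middle symbol $s \in X$; for these, the sequencing $[1\; 2\; \cdots\; v\; y_1\; \cdots\; y_{v+4}]$ automatically fails to contain the triple in increasing order, because the middle point $s$ lies in the $X$-block and hence precedes both outer points $i, j$, exactly as in item~2 of the previous proof. So these triples pose no obstruction, independent of how the points of $Y$ are ordered among themselves.

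The real work is the internal triples on $Y$. These must be covered by a fixed small configuration (essentially a $\DTS$ on the $v+4$ new points, or the latin-square diagonal patch that the construction prescribes), and I must order the points of $Y$ so that none of these internal triples appears in increasing order. Concretely, I would write $Y = \{v+1, \dots, 2v+4\}$ and seek a permutation $[y_1\; \cdots\; y_{v+4}]$ of $Y$ that is a good sequencing for the internal triples alone; appending it to $[1\; 2\; \cdots\; v]$ then yields a $(2v+4)$-good sequencing of the whole design, since the full sequencing has length exactly $2v+4$ and we have shown both families of triples avoid increasing order.

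The hard part will be verifying that the particular internal structure on $Y$ supplied by the doubling construction does admit such a good ordering of its own points. This is where I must be careful: unlike the crossing triples, whose good behaviour is forced by the concatenation, the internal triples depend on the fine structure of the construction. I expect to handle this by either (a) checking that the internal configuration is itself a small $\DTS$ on a bounded number of points that is already known to be good-sequenceable (e.g.\ reducing to the $\DTS(4)$ of Example~\ref{4.exam} together with the latin square entries, whose outer points can be reordered freely), or (b) explicitly exhibiting the ordering of $Y$ and checking each internal triple by hand. Once that combinatorial core is pinned down, the rest of the argument is the same bookkeeping as in Lemma~\ref{double1.lem}, and the conclusion follows.
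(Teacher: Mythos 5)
Your outline correctly identifies the architecture (concatenate the good sequencing of $X$ with an ordering of the $v+4$ new points; crossing triples with middle point in $X$ are automatically harmless), and this is indeed the skeleton of the paper's proof. But the proposal has a genuine gap: it never writes down the $v \to 2v+4$ construction, and so never does the one piece of work the lemma actually requires. Worse, your guess about what that construction looks like would lead you astray. The internal triples on $Y$ are \emph{not} a small bounded patch or a $\DTS(4)$-like configuration: in the standard construction (the one the paper uses), one takes $Y = \mathbb{Z}_{v+4}$, covers the directed differences $1, \dots, v$ within $Y$ by the crossing triples $(a, x_i, b)$ with $b - a \equiv i \bmod{(v+4)}$, and covers the remaining differences $v+1, v+2, v+3$ by the $v+4$ triples $(i,\; v+2+i,\; v+1+i) \bmod{(v+4)}$, one for each $i \in \mathbb{Z}_{v+4}$. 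These internal triples involve \emph{all} $v+4$ points of $Y$, so your option (a) --- reducing to a known good-sequenceable $\DTS$ on a bounded point set such as Example~\ref{4.exam} --- cannot work: no configuration confined to finitely many points can cover the $3(v+4)$ directed edges of differences $v+1, v+2, v+3$.

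Your option (b) is what the paper does, but it is precisely the step you deferred. With $Y$ ordered naturally as $0 < 1 < \cdots < v+3$ after the points of $X$, each internal triple $(i,\; v+2+i,\; v+1+i)$ avoids increasing order for a simple reason: reducing mod $v+4$, the second entry exceeds the third (they are consecutive residues in decreasing order) except in the single wrap-around case $v+2+i \equiv 0$, where instead the first entry $i$ exceeds the second entry $0$. This two-line check is the entire content of the lemma beyond the bookkeeping you already have, and without exhibiting the construction and performing it, the proof is incomplete. (A further small inaccuracy: the crossing triples here arise from difference classes $S_i$ rather than from a latin square with constant diagonal as in Lemma~\ref{double1.lem}, though your observation that their middle points lie in $X$ and hence precede both outer points is correct and is exactly how the paper disposes of them.)
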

\begin{proof}
Let $(X = \{x_1,\dots,x_v\}, \mathcal{B})$ be a $\DTS(v)$ having the $v$-good sequencing $[x_1\ x_2\ \cdots\ x_v]$.
Let $Y = \mathbb{Z}_{v+4}$ be disjoint from $X$. Form $v$ disjoint sets $S_1, \dots, S_v$, each consisting  of $v+4$ ordered pairs of points from $Y$, by taking \[S_i = \{(a,b) : b-a \equiv i \bmod{(v+4)}\}\]
for $i = 1, \dots , v$. Now form a set $\mathcal{C}$ comprised of the following triples:
\begin{enumerate}
	\item for each $i$, $1 \leq i \leq v$,  and for every $(a,b) \in S_i$, 
	the triple $(a,x_i,b) \in \mathcal{C}$, and
	\item for each $i \in \mathbb{Z}_{v+4}$, the triple $(i, v+2+i,v+1+i) \bmod{(v+4)} \in \mathcal{C}$.
\end{enumerate}
Then $(X \cup Y, \mathcal{B} \cup \mathcal{C})$ is a $\DTS(2v+4)$.

We claim that a $(2v+4)$-good sequencing of this DTS is given by
\[
[x_1\; x_2\; \cdots \; x_v\; 0\; 1\; \cdots \; v+3].
\]
The first $v$ points do not contain a triple because $[x_1\; x_2\; \cdots\; x_v]$ is a $v$-good sequencing of the triples in $\mathcal{B}$. For each of the triples $(a,x_i,b) \in \mathcal{C}$ constructed in 1., $x_i$ occurs in the sequencing before the point $a$. Also, for each of the triples in $(i, v+2+i,v+1+i) \in \mathcal{C}$ constructed in 2., 
either 
\begin{enumerate}
\item $v+2+i \bmod{(v+4)} > v+1+i \bmod{(v+4)}$, or 
\item $v+2+i = 0$, in which case $i > 0 = v+2+i$. 
\end{enumerate}
So, the sequencing is $(2v+4)$-good.
\end{proof}

These lemmas suffice to prove the desired existence result.

\begin{theorem}
There exists a $\DTS(v)$ having a $v$-good sequencing if and only if $v \equiv 0,1 \mod{3}$.
\end{theorem}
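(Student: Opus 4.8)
The plan is to prove the nontrivial (sufficiency) direction by strong induction on $v$, using the two doubling constructions of Lemmas~\ref{double1.lem} and~\ref{double2.lem} as the inductive step. Necessity of $v \equiv 0,1 \bmod 3$ is already recorded, since it is required merely for a $\DTS(v)$ to exist. For the base cases I would take $v \in \{3,4,6\}$, which are precisely the admissible orders below $7$, and invoke Examples~\ref{3.exam}, \ref{4.exam} and~\ref{6.exam} to place them in $\K$.

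For the inductive step, suppose $v \geq 7$ with $v \equiv 0,1 \bmod 3$, and assume every admissible order (i.e.\ every order $\equiv 0,1 \bmod 3$) smaller than $v$ lies in $\K$. I would split on the parity of $v$. If $v$ is odd, I would set $u = (v-1)/2$, so that $v = 2u+1$, and invoke Lemma~\ref{double1.lem}; if $v$ is even, I would set $u = (v-4)/2$, so that $v = 2u+4$, and invoke Lemma~\ref{double2.lem}. In either case the relevant lemma produces the desired $v$-good sequencing of a $\DTS(v)$ from one for a $\DTS(u)$, provided $u$ is itself an admissible order lying in $\K$.

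The remaining work, and the only point requiring care, is the bookkeeping that the reduced order $u$ is always a legitimate smaller case. A short congruence check (working modulo $6$, say) shows that $v \equiv 1 \bmod 3$ forces $u \equiv 0 \bmod 3$ while $v \equiv 0 \bmod 3$ forces $u \equiv 1 \bmod 3$, so $u \equiv 0,1 \bmod 3$ in every case and hence $u$ is an admissible DTS order. One then verifies the size bound: for odd $v \geq 7$ one has $u = (v-1)/2 \geq 3$, and the smallest even admissible order that is at least $7$ is $v = 10$, giving $u = (v-4)/2 \geq 3$. Thus $3 \leq u < v$ throughout, so the inductive hypothesis applies to $u$, and the three base cases $3,4,6$ are exactly what is needed to seed the recursion. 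This residue-and-parity verification is the main (if routine) obstacle; once it is in place, the two lemmas close the induction and the theorem follows.
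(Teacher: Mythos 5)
Your proposal is correct and follows essentially the same route as the paper's own proof of this theorem: induction with base cases $v=3,4,6$ (Examples~\ref{3.exam}, \ref{4.exam}, \ref{6.exam}), splitting on parity and applying Lemma~\ref{double1.lem} when $v=2k+1$ is odd and Lemma~\ref{double2.lem} when $v=2k+4$ is even, with the same (correct) check that the reduced order is admissible and at least $3$. Your residue computation modulo $3$ and the verification that the smallest even admissible order above $6$ is $10$ are both accurate, so the argument is complete.
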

\begin{proof}
We have already noted  that $v \equiv 0,1 \bmod{3}$ is a necessary condition for existence of a $\DTS(v)$, and
there exists a $\DTS(v)$ with a $v$-good sequencing for $v = 3,4,$ and 6. We proceed by induction. Suppose $v > 6$, $v \equiv 0,1 \bmod{3}$. If $v$ is odd, write $v = 2k+1$. Then $k \equiv 0,1 \bmod{3}$, so by induction, it follows that there exists a $\DTS(k)$ having a $k$-good sequencing. Hence, there exists a $\DTS(v)$ having a $v$-good sequencing by applying Lemma \ref{double1.lem}. Similarly, if $v$ is even, write $v = 2k+4$ and apply Lemma \ref{double2.lem}.
\end{proof}

\section{Computational results}
\label{computation.sec}

In this section, we report our results on $v$-good sequencings of \DTS$(v)$, for $v \leq 7$. The 
nonisomorphic \DTS$(v)$ for $v \leq 7$ have been enumerated by Colbourn and Colbourn \cite{CC} (see also \cite{OP}). We can test a \DTS$(v)$
by exhaustively checking all $v!$ permutations to see which of them are $v$-good sequencings. This does not take very much time for these small values of $v$.

Up to isomorphism, there is a unique \DTS$(3)$ and it has a $3$-good sequencing, as shown in 
Example \ref{3.exam}.

There are three nonisomorphic \DTS$(4)$. We present the three designs, along with $4$-good sequencings:

\begin{description}
\item[\D{4}{1}{}]:	$\begin{array}{*{15}{l}}
 (0,3,2)& (1,2,3)&
	 (2,1,0)& (3,0,1)&\end{array}$

	$4$-good sequencing: 0213\\
	number of $4$-good sequencings: 8
\end{description}\begin{description}
\item[\D{4}{2}{}]:	$\begin{array}{*{15}{l}}
 (0,3,2)& (1,2,3)&
	 (2,0,1)& (3,1,0)&\end{array}$
	 
    $4$-good sequencing: 0213\\
	number of $4$-good sequencings: 8
\end{description}\begin{description}
\item[\D{4}{3}{}]:	$\begin{array}{*{15}{l}}
 (0,3,2)& (1,2,0)&
	 (2,1,3)& (3,0,1)&\end{array}$

    $4$-good sequencing: 0123\\
	number of $4$-good sequencings: 8
\end{description}

There are 32 nonisomorphic \DTS$(6)$ and they all have $6$-good sequencings. The designs and their $6$-good sequencings are presented in the technical report \cite{KSV}.

There are exactly 2368 nonisomorphic \DTS$(7)$. We construct these following the method described in \cite{OP}.
There are four nonisomorphic $(v,3,2)$-BIBDs (or \TTS$(7)$), which we denote \D{7}{1}{}, \D{7}{2}{}, \D{7}{3}{} and \D{7}{4}{}. The triples in these four designs are directed in all possible ways to form \DTS$(7)$ and then isomorphic designs are eliminated. It turns out that all but four of the nonisomorphic \DTS$(7)$ have $7$-good sequencings. These $7$-good sequencings are all presented in \cite{KSV}.

The results can be summarized as follows:
\begin{itemize}
\item 18 \DTS$(7)$ have \D{7}{1}{} as the underlying \TTS$(7)$. All of these \DTS$(7)$ have $7$-good sequencings.
\item 274 \DTS$(7)$ have \D{7}{2}{} as the underlying \TTS$(7)$.  All of these \DTS$(7)$ have $7$-good sequencings.
\item 1060 \DTS$(7)$ have \D{7}{3}{} as the underlying \TTS$(7)$.  All of these \DTS$(7)$ have $7$-good sequencings.
\item 1016 \DTS$(7)$ have \D{7}{4}{} as the underlying \TTS$(7)$.  1012 of these \DTS$(7)$ have $7$-good sequencings.
\end{itemize}

It is interesting to note that  the four \DTS$(7)$ that do not have 
 $7$-good sequencings all have  a $6$-good sequencing. 
 These four \DTS$(7)$, along with $6$-good sequencings,
 are as follows:

\begin{description}
\item[\D{7}{4.}{926}]:	$\begin{array}{*{15}{l}}
 (0,4,2)& (0,5,6)& (1,3,0)& (1,5,2)& (2,0,1)& (2,6,5)& (3,1,6)&\\
	 (3,2,4)& (4,3,5)& (4,6,1)& (5,0,3)& (5,1,4)& (6,2,3)& (6,4,0)&\end{array}$
	 
	 6-good sequencing: 0123456\\
	number of 6-good sequencings: 124
\end{description}

\begin{description}
\item[\D{7}{4.}{958}]:	$\begin{array}{*{15}{l}}
 (0,4,2)& (0,5,3)& (1,5,2)& (1,6,3)& (2,1,0)& (2,3,4)& (3,0,1)&\\
	 (3,2,6)& (4,3,5)& (4,6,0)& (5,0,6)& (5,1,4)& (6,2,5)& (6,4,1)&\end{array}$
	 
	 6-good sequencing: 0245613\\
	number of 6-good sequencings: 124
	
\end{description}

\begin{description}
\item[\D{7}{4.}{1015}]:	$\begin{array}{*{15}{l}}
 (0,3,5)& (0,4,2)& (1,2,5)& (1,6,3)& (2,1,0)& (2,3,6)& (3,0,1)&\\
	 (3,2,4)& (4,0,6)& (4,5,3)& (5,4,1)& (5,6,0)& (6,1,4)& (6,5,2)&\end{array}$
	 
	 6-good sequencing: 0153462\\
	number of 6-good sequencings: 112
\end{description}

\begin{description}
\item[\D{7}{4.}{1016}]:	$\begin{array}{*{15}{l}}
 (0,3,1)& (0,4,6)& (1,2,0)& (1,6,4)& (2,1,5)& (2,3,4)& (3,0,5)&\\
	 (3,2,6)& (4,0,2)& (4,5,1)& (5,4,3)& (5,6,2)& (6,1,3)& (6,5,0)&\end{array}$
	 
	 6-good sequencing: 0124356\\
	number of 6-good sequencings: 112
\end{description}

It does not seem feasible to test all the \DTS$(9)$ because it is shown in 
\cite{OP} that there are $596,893,386$ nonisomorphic \DTS$(9)$.

\section{Algorithmic approaches}
\label{algorithms.sec}

It is of interest to devise an algorithm to determine if  a given \DTS$(v)$
can be sequenced. Obviously, checking all $v!$ permutations is not practical for large values of $v$, 
so we would like to have a more efficient algorithm.

Here is one possible approach that could be considered.
A directed triple $(a,b,c)$ in a \DTS$(v)$ 
leads to the following necessary condition for the existence of a $v$-good sequencing of $v$ points:
\begin{equation}
\label{condition.eq}
(c < b) \vee (b < a).
\end{equation}
For each of the $v(v-1)/3$ triples in a \DTS$(v)$, we obtain a condition similar to
(\ref{condition.eq}). Suppose, for each of the triples, we choose one of the two relevant inequalities 
(i.e., for the triple $(a,b,c)$, we choose $c < b$, or we choose $b < a$). We can interpret an inequality as an edge in a directed graph, i.e., $c < b$ corresponds to the directed edge $(c,b)$ and 
$b < a$ corresponds to the directed edge $(b,a)$.
Thus we obtain by this method a directed graph $\mathcal{D}$, on the $v$ points of the \DTS$(v)$, having $e = v(v-1)/3$ edges. 

It is easy to determine in polynomial time if this graph $\mathcal{D}$ has a \emph{topological ordering} (i.e., a total ordering of the points that is compatible with all the edges in the directed graph). A topological ordering is clearly a $v$-good sequencing of the given \DTS$(v)$. It is well-known that there is a topological ordering of a directed graph if and only if the graph is a DAG (directed acyclic graph). 
Further, testing a directed graph to see if it has a topological ordering 
can be done using a simple modification of DFS (depth-first search).  
The complexity of this algorithm (given a particular graph $\mathcal{D}$) is $O(v+e) = O(v^2)$.
(For these results, see, for example, \cite[\S 22]{CLRS}.)

We could construct all the possible directed graphs and test each of them in this way. If none of the graphs are DAGs, then the \DTS$(v)$ does not have a $v$-good sequencing. The problem is that there are $2^{v(v-1)/3}$ graphs to test, so this is not a polynomial-time algorithm. 
However, in practice, we can often achieve a significant reduction in the number of graphs to be considered. 
It is possible that this approach might lead to a fairly simple proof that a given 
\DTS$(v)$ has no $v$-good sequencing. This technique works well in practice for small values of $v$ and it can even be done by hand with a bit of patience.  We illustrate by deriving a proof that the \DTS$(7)$ named 
\D{7}{4.}{926}
(which was presented in Section \ref{computation.sec}) has no $7$-good sequencing.

\begin{theorem} The \DTS$(7)$ named \D{7}{4.}{926} does not have a $7$-good sequencing.
\end{theorem}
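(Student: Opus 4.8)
The plan is to apply the directed-graph method introduced at the start of this section. A $7$-good sequencing is precisely a total order on $\{0,1,\dots,6\}$ in which, for every triple $(a,b,c)$ of the design, the necessary condition $(c<b)\vee(b<a)$ holds. So I would assume such an order exists and derive a contradiction. The point is not to enumerate all $2^{14}$ choices of edges, but to branch on a single well-chosen relation and then let the remaining thirteen conditions force the rest of the order by a short propagation argument.

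First I would record the fourteen disjunctive conditions, one per triple. The observation driving the proof is that the pair $\{2,4\}$ occurs as the ``swing'' pair in two of them: the triple $(0,4,2)$ gives $(2<4)\vee(4<0)$ and the triple $(3,2,4)$ gives $(4<2)\vee(2<3)$. I would therefore split into the two cases $2<4$ and $4<2$ and treat each separately.

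In each case the propagation is mechanical: whenever the current partial order already falsifies one disjunct of a condition, the other disjunct becomes forced, and I add it together with its transitive consequences. For instance, in the case $2<4$ the condition for $(3,2,4)$ forces $2<3$, which through $(6,2,3)$ forces $2<6$, and so on around the design. Carrying this out, both branches funnel into a contradiction localized on the three points $\{1,5,6\}$: in the case $2<4$ the forced relations produce the cycle $5<6<1<5$, while in the case $4<2$ they produce the cycle $1<6<5<1$. Either way the forced partial order contains a directed cycle, so no total order can satisfy all fourteen conditions, and hence the $\DTS(7)$ named $\D{7}{4.}{926}$ has no $7$-good sequencing.

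The only real obstacle is the bookkeeping and, more importantly, the choice of branch: since the empty partial order forces nothing, at least one branch is unavoidable, and one has to pick the branching relation so that the forced propagation closes up quickly rather than fanning out into further subcases. Branching on $\{2,4\}$ is what makes both chains terminate after only a handful of forced steps in the common obstruction on $\{1,5,6\}$; a poor choice would leave several conditions genuinely undecided and require nested case analysis. In writing the proof I would present each chain as a short sequence of forced inequalities, citing the triggering triple at each step, so that the reader can verify the resulting cycle by inspection.
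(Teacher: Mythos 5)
Your overall strategy---branch on a single relation and let the fourteen disjunctive conditions propagate until the forced partial order contains a directed cycle---is exactly the paper's method, and your split on the pair $\{2,4\}$ is essentially equivalent to the paper's split on the disjuncts of the triple $(0,4,2)$ (in your branch $4<2$, that triple immediately forces $4<0$). However, the specific contradictions you claim are wrong, and this matters, because exhibiting the terminal cycle is the entire content of the proof. In the branch $2<4$, propagation forces, in order, $2<3$ (from $(3,2,4)$), then $2<6$, $5<6$, $5<0$, $3<0$, $3<1$, $6<1$, $6<4$, $0<4$; the transitive consequences include $5<1$ (via $5<6<1$) and $2<0$. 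Hence the triple $(2,0,1)$ forces $1<0$ and the triple $(5,1,4)$ forces $4<1$, and the actual contradiction is the cycle $0<4<1<0$ on $\{0,1,4\}$. Your claimed cycle $5<6<1<5$ is never forced; indeed its edge $1<5$ is the \emph{negation} of the forced relation $5<1$. Symmetrically, in the branch $4<2$ the forced relations are $4<0$, $4<6$, $1<6$, $1<3$, $0<3$, $0<5$, $6<5$, $6<2$, $3<2$, with transitive consequences $1<5$ and $4<3$; then $(1,5,2)$ forces $2<5$ and $(4,3,5)$ forces $5<3$, and the contradiction is the cycle $5<3<2<5$ on $\{2,3,5\}$, not $1<6<5<1$.

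The cycles you name on $\{1,5,6\}$ do appear in the paper's proof, but only hypothetically: the paper does not close under transitivity, so at the triples $(5,1,4)$ and $(1,5,2)$ it must refute the discarded disjunct by exhibiting the would-be cycle $6<1<5<6$ (respectively $1<6<5<1$) that the discarded choice would create. You have lifted these refutation cycles and presented them as the forced contradictions, which inverts their role: they are the cycles that show one disjunct is \emph{impossible}, not cycles contained in the forced order. The repair is mechanical---rerun your propagation honestly and report the cycles $0<4<1<0$ and $5<3<2<5$, or imitate the paper's short sub-case analysis within each branch---but as written, a reader attempting the verification you promise at the end would find both of your claimed cycles false.
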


\begin{proof}
First, we list the triples in \D{7}{4.}{926}, along with the conditions derived from them, in Table \ref{proof.tab}. The idea is to show that any directed graph that satisfies the required conditions for every triple must contain a directed cycle.
\begin{table}
\caption{The triples in $D(7)4.926$}
\label{proof.tab}
\[
\begin{array}{c|c}
\text{triple} & \text{condition} \\ \hline
T_1=(0,4,2) & (4 < 0) \vee (2 < 4)\\
T_2=(0,5,6) & (5 < 0) \vee (6 < 5)\\
T_3=(1,3,0) & (3 < 1) \vee (0 < 3)\\
T_4=(1,5,2) & (5 < 1) \vee (2 < 5)\\
T_5=(2,0,1) & (0 < 2) \vee (1 < 0)\\
T_6=(2,6,5) & (6 < 2) \vee (5 < 6)\\
T_7=(3,1,6) & (1 < 3) \vee (6 < 1)\\
T_8=(3,2,4) & (2 < 3) \vee (4 < 2)\\
T_9=(4,3,5) & (3 < 4) \vee (5 < 3)\\
T_{10}=(4,6,1) & (6 < 4) \vee (1 < 6)\\
T_{11}=(5,0,3) & (0 < 5) \vee (3 < 0)\\
T_{12}=(5,1,4) & (1 < 5) \vee (4 < 1)\\
T_{13}=(6,2,3) & (2 < 6) \vee (3 < 2)\\
T_{14}=(6,4,0) & (4 < 6) \vee (0 < 4)
\end{array}
\]
\end{table}
We begin by considering triple $T_1$. The proof divides into two cases:
\begin{description}
\item [Case 1]: $4 < 0$
\item [Case 2]: $2 < 4$
\end{description}

For case 1, we assume $4 < 0$ and we proceed as follows:
\begin{eqnarray*}
T_{14} &\implies& (4 < 6)  \vee  (0 < 4), \text{ so } 4 < 6\\
T_{10} &\implies &(6 < 4)  \vee  (1 < 6), \text{ so } 1 < 6\\
T_7 &\implies &(1 < 3)  \vee  (6 < 1), \text{ so }  1 < 3\\
T_3 &\implies &(3 < 1)  \vee  (0 < 3), \text{ so }  0 < 3\\
T_{11} &\implies& (0 < 5)  \vee  (3 < 0), \text{ so }  0 < 5\\
T_2 &\implies &(5 < 0)  \vee  (6 < 5), \text{ so }  6 < 5\\
T_6 &\implies &(6 < 2)  \vee  (5 < 6), \text{ so }  6 < 2\\
T_{13} &\implies &(2 < 6)  \vee  (3 < 2), \text{ so }  3 < 2\\
T_8 &\implies &(2 < 3)  \vee  (4 < 2), \text{ so }  4 < 2.
\end{eqnarray*}
So far, there are no directed cycles, so we proceed a bit further.
\begin{eqnarray*}
T_4 &\implies&  (5 < 1)  \vee  (2 < 5). 
\end{eqnarray*}
If $5 < 1$, then $1 < 6 < 5 < 1$ is a directed cycle.  
   Therefore $2<5$.
\begin{eqnarray*}
T_9 &\implies&  (3<4)  \vee  (5<3). 
\end{eqnarray*}
If $3 < 4$, then we get the directed cycle  $3 < 4 < 0 < 3$.
Therefore $5 < 3$. But this creates the directed cycle $5 < 3 < 2 < 5$.
Thus Case 1 is impossible.

\medskip

Now we turn to Case 2, where we assume $2 < 4$. We proceed as follows:
\begin{eqnarray*}
T_{8} &\implies& (2<3)  \vee  (4 < 2), \text{ so } 2<3\\
T_{13} &\implies &(2<6)  \vee  (3<2), \text{ so } 2 < 6\\
T_6 &\implies &(6<2)  \vee  (5<6), \text{ so }  5<6\\
T_2 &\implies &(5<0)  \vee  (6<5), \text{ so }  5<0\\
T_{11} &\implies& (0 < 5)  \vee  (3 < 0), \text{ so }  3<0\\
T_3 &\implies &(3<1)  \vee  (0<3), \text{ so }  3<1\\
T_7 &\implies &(1<3)  \vee  (6<1), \text{ so }  6 < 1\\
T_{10} &\implies &(6<4)  \vee  (1<6), \text{ so }  6 < 4\\
T_{14} &\implies &(4<6)  \vee  (0<4), \text{ so }  0<4.
\end{eqnarray*}
We continue.
\begin{eqnarray*}
T_{5} &\implies& (0<2)  \vee  (1<0).
\end{eqnarray*}
If $0 < 2$, then $3 < 0 < 2 < 3$ is a directed cycle. Therefore $1 < 0$.
\begin{eqnarray*}
T_{12} &\implies&  (1<5)  \vee  (4<1). 
\end{eqnarray*}
   If $1 < 5$, then $6 < 1 < 5 < 6$ is a directed cycle. Therefore $4 < 1$
   But then $0 < 4 < 1 < 0$ is a directed cycle.
Thus Case 2 is also impossible.
\end{proof}

Similar reasoning can be used to show that the \DTS$(7)$ named \D{7}{4.}{958}, 
\D{7}{4.}{1015}, and \D{7}{4.}{1016} 
(all of which are presented in Section \ref{computation.sec}) have no $7$-good sequencings.

We are also able to use this technique to construct \DTS$(v)$ that can be proven not to have a
$v$-good sequencing for $v \in \{9,10,12,13,16,18\}$.  Our proof  depends on the following lemma.

\begin{lemma}
\label{Shannon.lem}
Suppose that a \DTS$(v)$ contains the following twelve triples:
\[
\begin{array}{llllll}
(1,2,3)
&(4,3,2)
&(3,4,5)
&(6,5,4)
&(5,6,2)
&(7,2,6)\\
(2,7,8)
&(3,8,7)
&(8,3,6)
&(2,1,0)
&(6,0,1)
&(0,6,3)
\end{array}
\]
Then the \DTS$(v)$  cannot have a $v$-good sequencing.
\end{lemma}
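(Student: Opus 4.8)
The plan is to reuse, essentially verbatim, the topological-ordering argument already applied to \D{7}{4.}{926}. A $v$-good sequencing induces a total order $<$ on the point set, and in particular a total order on the nine points $\{0,1,\dots,8\}$ that appear in the twelve listed triples. For each such triple $(a,b,c)$, the defining property of a good sequencing forbids $a<b<c$, which forces the necessary condition $(c<b)\vee(b<a)$. Thus it suffices to show that the twelve resulting disjunctive conditions admit no common total order, equivalently that every way of selecting one inequality from each condition produces a directed cycle. Since the argument invokes only these twelve triples, it is insensitive both to the remaining structure of the design and to the value of $v$, which is exactly what the statement requires.

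First I would split on the condition coming from the triple $(5,6,2)$, namely $(2<6)\vee(6<5)$, and I claim that each branch then propagates deterministically with no further case analysis. In the branch $6<5$, the disjunct $5<6$ of $(6,5,4)$ is excluded by antisymmetry, forcing $4<5$; this excludes $5<4$ of $(3,4,5)$, forcing $4<3$; then $3<4$ of $(4,3,2)$ dies, forcing $2<3$; then $3<2$ of $(1,2,3)$ dies, forcing $2<1$; then $1<2$ of $(2,1,0)$ dies, forcing $0<1$; then $1<0$ of $(6,0,1)$ dies, forcing $0<6$; then $6<0$ of $(0,6,3)$ dies, forcing $3<6$; then $6<3$ of $(8,3,6)$ dies, forcing $3<8$; then $8<3$ of $(3,8,7)$ dies, forcing $7<8$; then $8<7$ of $(2,7,8)$ dies, forcing $7<2$; and finally $2<7$ of $(7,2,6)$ dies, forcing $6<2$. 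Now $2<3$, $3<6$ and $6<2$ give the directed cycle $2<3<6<2$, so this branch is impossible.

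The branch $2<6$ runs in the same mechanical way, closing up in the opposite orientation: $2<6$ forces $2<7$ from $(7,2,6)$, then successively $8<7$, $8<3$, $6<3$, $6<0$, $1<0$, $1<2$, and finally $3<2$ from $(1,2,3)$, producing the cycle $2<6<3<2$. Since both branches terminate in a directed cycle among the points $\{2,3,6\}$, no total order can satisfy all twelve conditions, and hence the \DTS$(v)$ has no $v$-good sequencing.

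The only substantive point is the choice of the initial pivot: the twelve triples have been engineered so that splitting on $(5,6,2)$ makes every subsequent condition \emph{forced}, because at each step one disjunct is the reverse of an already-established strict inequality and is therefore excluded by antisymmetry, leaving a unique choice. I expect the main obstacle to be nothing more than verifying this determinism — confirming that no branch secretly demands a second split — which reduces to checking, condition by condition, that exactly one disjunct survives at each stage. Once the pivot $(5,6,2)$ is in hand, the two chains close symmetrically and the proof is complete.
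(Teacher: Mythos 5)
Your proposal is correct and follows essentially the same argument as the paper: translate each triple $(a,b,c)$ into the disjunction $(c<b)\vee(b<a)$, split into two cases, propagate forced inequalities by antisymmetry, and exhibit a directed cycle on $\{2,3,6\}$ in each branch. The only (immaterial) difference is the pivot --- you split on $T_5=(5,6,2)$ while the paper splits on $T_1=(1,2,3)$ --- and since the twelve conditions form closed chains, both choices yield the same forced deductions entered at a different point; every step you list checks out.
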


\begin{table}[tb]
\caption{Twelve triples}
\label{Shannon.tab}
\begin{center}
\begin{tabular}{c | c}
	\text{triple} & \text{condition} \\
	\hline
	$T_1 = (1,2,3)$ & $(2 < 1) \vee (3 < 2)$ \\
	$T_2 = (4,3,2)$ & $(3 < 4) \vee (2 < 3)$ \\
	$T_3 = (3,4,5)$ & $(4 < 3) \vee (5 < 4)$ \\
	$T_4 = (6,5,4)$ & $(5 < 6) \vee (4 < 5)$ \\
	$T_5 = (5,6,2)$ & $(6 < 5) \vee (2 < 6)$ \\
	$T_6 = (7,2,6)$ & $(2 < 7) \vee (6 < 2)$ \\
	$T_7 = (2,7,8)$ & $(7 < 2) \vee (8 < 7)$ \\
	$T_8 = (3,8,7)$ & $(8 < 3) \vee (7 < 8)$ \\
	$T_9 = (8,3,6)$ & $(3 < 8) \vee (6 < 3)$ \\
	$T_{10} = (2,1,0)$ & $(1 < 2) \vee (0 < 1)$ \\
	$T_{11} = (6,0,1)$ & $(0 < 6) \vee (1 < 0)$ \\
	$T_{12} = (0,6,3)$ & $(6 < 0) \vee (3 < 6)$
\end{tabular}
\end{center}
\end{table}

\begin{proof}

First, we list the twelve triples, along with the conditions derived from them, in Table \ref{Shannon.tab}. We begin by considering triple $T_1$. The proof divides into two cases:

\begin{description}
\item [Case 1]: $3 < 2$
\item [Case 2]: $2 < 1$
\end{description}

For Case 1, we assume $3 < 2$. We proceed as follows:
\begin{align*}
T_{2} &\implies (3 < 4) \vee (2 < 3)\text{, so } 3 < 4 \\
T_{3} &\implies (4 < 3) \vee (5 < 4)\text{, so } 5 < 4 \\
T_{4} &\implies (5 < 6) \vee (4 < 5)\text{, so } 5 < 6 \\
T_{5} &\implies (6 < 5) \vee (2 < 6)\text{, so } 2 < 6 \\
T_{6} &\implies (2 < 7) \vee (6 < 2)\text{, so } 2 < 7 \\
T_{7} &\implies (7 < 2) \vee (8 < 7)\text{, so } 8 < 7 \\
T_{8} &\implies (8 < 3) \vee (7 < 8)\text{, so } 8 < 3 \\
T_{9} &\implies (3 < 8) \vee (6 < 3)\text{, so } 6 < 3.
\end{align*}
But then $3 < 2 < 6 < 3$ is a directed cycle. Thus Case 1 is impossible.

\medskip

Now we consider Case 2, where we assume $2 < 1$. Then we proceed as follows:
\begin{align*}
T_{10} &\implies (1 < 2) \vee (0 < 1)\text{, so } 0 < 1 \\
T_{11} &\implies (0 < 6) \vee (1 < 0)\text{, so } 0 < 6 \\
T_{12} &\implies (6 < 0) \vee (3 < 6)\text{, so } 3 < 6 \\
T_{9} &\implies (3 < 8) \vee (6 < 3)\text{, so } 3 < 8 \\
T_{8} &\implies (8 < 3) \vee (7 < 8)\text{, so } 7 < 8 \\
T_{7} &\implies (7 < 2) \vee (8 < 7)\text{, so } 7 < 2 \\
T_{6} &\implies (2 < 7) \vee (6 < 2)\text{, so } 6 < 2 \\
T_{5} &\implies (6 < 5) \vee (2 < 6)\text{, so } 6 < 5 \\
T_{4} &\implies (5 < 6) \vee (4 < 5)\text{, so } 4 < 5 \\
T_{3} &\implies (4 < 3) \vee (5 < 4)\text{, so } 4 < 3 \\
T_{2} &\implies (3 < 4) \vee (2 < 3)\text{, so } 2 < 3. 
\end{align*}
But then $3 < 6 < 2 < 3$ is a directed cycle. Thus Case 2 is also impossible.
\end{proof}

For $v \in \{9,10,12,13,16,18\}$, we have constructed \DTS$(v)$ that contain the twelve triples listed in 
Lemma \ref{Shannon.lem}; see Examples \ref{DTS9.ex}--\ref{DTS18.ex}. The construction of these \DTS$(v)$ made use of a hill-climbing algorithm that is similar to the hill-climbing algorithm to construct \STS$(v)$ that is presented in \cite{Stinson}.

We provide a brief description of the hill-climbing approach we used. The algorithm attempts to construct a
\DTS$(v)$ by using three heuristics, which we name $H_1$, $H_2$ and $H_3$. In the following, $x$, $y$ and $z$ refer to points in the  \DTS$(v)$ that we are constructing.

\begin{center}
\noindent\framebox{
\begin{minipage}{4.75in}
\begin{description}[style=unboxed,leftmargin=0cm]
\item [$H_1$]  \quad If there exists a point $x$ such that there are at least two points $y,z$ such that the directed edges $xy$ and $xz$ have not occurred in a triple, then construct the triple $(x,y,z)$ and add it to the design. 
If the directed edge $yz$ already appears in a triple, then delete that triple.
\end{description}
\end{minipage}
}
\end{center}

\begin{center}
\noindent\framebox{
\begin{minipage}{4.75in}
\begin{description}[style=unboxed,leftmargin=0cm]
\item [$H_2$]    \quad If there exists a point $x$ such that there are two points $y,z$ such that the directed edges $yx$ and $xz$ have not occurred in a triple, then construct the triple $(y, x, z)$ and add it to the design. If the directed edge $yz$ already appears in a triple, then delete that triple.
\end{description}
\end{minipage}
}
\end{center}

\begin{center}
\noindent\framebox{
\begin{minipage}{4.75in}
\begin{description}[style=unboxed,leftmargin=0cm]

\item [$H_3$]    \quad If there exists a point $x$ such that there are at least two points $y,z$ such that the directed edges $yx$ and $zx$ have not occurred in a triple, then construct the triple $(y,z,x)$ and add it to the design. If $yz$ already appears in a triple then delete that triple.
\end{description}
\end{minipage}
}
\end{center}

The hill-climbing algorithm would randomly apply $H_1$, $H_2$ and $H_3$ over and over again, until (hopefully) a design is constructed.
However, we are trying to do something a bit more complicated, namely, to construct a 
\DTS$(v)$ that contains  the twelve \emph{initial triples} listed in 
Lemma \ref{Shannon.lem}.  Thus, we begin with the initial triples and we need to modify $H_1$, $H_2$ and $H_3$ so that we never delete an initial triple. This is straightforward; for example, $H_1$ would be replaced by the following modified heuristic.

\begin{center}
\noindent\framebox{
\begin{minipage}{4.75in}
\begin{description}[style=unboxed,leftmargin=0cm]
\item [$H_1^*$]   \quad Suppose there exists a point $x$ such that there are at least two points $y,z$ such that the directed edges $xy$ and $xz$ have not occurred in a triple. 
\begin{enumerate}
\item If there is no existing triple containing the directed edge $yz$,  then construct the triple $(x,y,z)$ and add it to the design. 
\item If there is a non-initial triple containing the directed edge $yz$, then delete that triple and add the 
triple $(x,y,z)$ to the design. 
\item If there is an initial triple containing the directed edge $yz$, then do nothing
(the heuristic fails in this case).
\end{enumerate}
\end{description}
\end{minipage}
}
\end{center}

$H_2$ and $H_3$ would be modified in a similar fashion as $H_1$.

As we mentioned above, we used this hill-climbing algorithm  to find several \DTS$(v)$ that do not have 
$v$-good sequencings. It should be emphasized that the algorithm is very fast and it ran almost instantaneously on a laptop for the small designs we considered.

\begin{example}
\label{DTS9.ex} A $\DTS(9)$ that has no $9$-good sequencing.
\[
\begin{array}{l@{\hspace{.08in}}l@{\hspace{.08in}}l@{\hspace{.08in}}l@{\hspace{.08in}}l@{\hspace{.08in}}l@{\hspace{.08in}}l@{\hspace{.08in}}l}
(1,2,3)
&(4,3,2)
&(3,4,5)
&(6,5,4)
&(5,6,2)
&(7,2,6)
&(2,7,8)
&(3,8,7)\\
(8,3,6)
&(2,1,0)
&(6,0,1)
&(0,6,3)
&(1,7,5)
&(5,3,1)
&(0,2,4)
&(5,0,7)\\
(8,2,5)
&(1,6,8)
&(7,3,0)
&(0,5,8)
&(8,4,0)
&(4,6,7)
&(4,8,1)
&(7,1,4)
\end{array}
\]
\end{example}

\begin{example}
\label{DTS10.ex} A $\DTS(10)$ that has no $10$-good sequencing.
\[
\begin{array}{l@{\hspace{.08in}}l@{\hspace{.08in}}l@{\hspace{.08in}}l@{\hspace{.08in}}l@{\hspace{.08in}}l@{\hspace{.08in}}l@{\hspace{.08in}}l}
(1,2,3)
&(4,3,2)
&(3,4,5)
&(6,5,4)
&(5,6,2)
&(7,2,6)
&(2,7,8)
&(3,8,7)\\
(8,3,6)
&(2,1,0)
&(6,0,1)
&(0,6,3)
&(0,9,7)
&(7,3,1)
&(0,2,4)
&(1,8,4)\\
(1,5,7)
&(9,2,5)
&(4,1,9)
&(7,9,4)
&(9,3,0)
&(5,0,8)
&(6,9,8)
&(9,1,6)\\
(8,2,9)
&(5,3,9)
&(8,5,1)
&(4,8,0)
&(4,6,7)
&(7,0,5)
\end{array}
\]
\end{example}

\begin{example}
\label{DTS12.ex} A $\DTS(12)$ that has no $12$-good sequencing.
\[
\begin{array}{l@{\hspace{.08in}}l@{\hspace{.08in}}l@{\hspace{.08in}}l@{\hspace{.08in}}l@{\hspace{.08in}}l@{\hspace{.08in}}l}
(1,2,3)
&(4,3,2)
&(3,4,5)
&(6,5,4)
&(5,6,2)
&(7,2,6)
&(2,7,8)\\
(3,8,7)
&(8,3,6)
&(2,1,0)
&(6,0,1)
&(0,6,3)
&(11,4,7)
&(11,5,3)\\
(6,7,9)
&(11,0,2)
&(4,1,8)
&(5,0,7)
&(11,1,6)
&(6,11,10)
&(5,11,8)\\
(9,4,6)
&(7,4,10)
&(10,4,0)
&(0,9,5)
&(9,0,8)
&(10,7,5)
&(2,10,11)\\
(7,3,0)
&(3,11,9)
&(9,7,1)
&(10,3,1)
&(8,1,4)
&(8,0,10)
&(8,2,5)\\
(10,9,2)
&(10,6,8)
&(9,3,10)
&(5,1,10)
&(0,4,11)
&(2,4,9)
&(1,5,9)\\
(8,9,11)
&(1,7,11)
\end{array}
\]
\end{example}

\begin{example}
\label{DTS13.ex} A $\DTS(13)$ that has no $13$-good sequencing.
\[
\begin{array}{l@{\hspace{.08in}}l@{\hspace{.08in}}l@{\hspace{.08in}}l@{\hspace{.08in}}l@{\hspace{.08in}}l@{\hspace{.08in}}l}
(1,2,3)
&(4,3,2)
&(3,4,5)
&(6,5,4)
&(5,6,2)
&(7,2,6)
&(2,7,8)\\
(3,8,7)
&(8,3,6)
&(2,1,0)
&(6,0,1)
&(0,6,3)
&(10,4,6)
&(7,1,4)\\
(0,11,8)
&(6,11,12)
&(9,5,10)
&(10,8,2)
&(12,1,11)
&(1,10,5)
&(4,12,8)\\
(7,3,11)
&(10,11,3)
&(8,12,5)
&(11,9,6)
&(6,8,10)
&(0,12,2)
&(5,7,0)\\
(4,10,0)
&(2,9,12)
&(6,7,9)
&(7,5,12)
&(9,11,7)
&(10,1,7)
&(9,2,4)\\
(11,4,1)
&(12,4,7)
&(1,12,6)
&(9,1,8)
&(5,9,3)
&(12,10,9)
&(5,8,1)\\
(0,4,9)
&(8,9,0)
&(3,1,9)
&(0,7,10)
&(3,10,12)
&(8,4,11)
&(12,3,0)\\
(11,0,5)
&(2,5,11)
&(11,2,10)
\end{array}
\]
\end{example}

\begin{example}
\label{DTS16.ex} A $\DTS(16)$ that has no $16$-good sequencing.
\[
\begin{array}{l@{\hspace{.06in}}l@{\hspace{.06in}}l@{\hspace{.06in}}l@{\hspace{.06in}}l@{\hspace{.06in}}l@{\hspace{.06in}}l}
(1,2,3)
&(4,3,2)
&(3,4,5)
&(6,5,4)
&(5,6,2)
&(7,2,6)
&(2,7,8)\\
(3,8,7)
&(8,3,6)
&(2,1,0)
&(6,0,1)
&(0,6,3)
&(8,10,13)
&(4,14,1)\\
(7,0,9)
&(14,10,12)
&(1,14,6)
&(9,3,13)
&(5,1,9)
&(2,15,12)
&(9,8,12)\\
(12,13,8)
&(11,10,2)
&(14,8,4)
&(10,1,8)
&(13,12,3)
&(3,9,11)
&(7,4,10)\\
(13,6,15)
&(2,4,13)
&(14,7,5)
&(9,2,10)
&(15,14,13)
&(0,10,4)
&(6,11,12)\\
(12,5,15)
&(15,6,8)
&(5,3,12)
&(7,11,13)
&(13,11,1)
&(6,13,7)
&(4,15,7)\\
(1,10,11)
&(8,2,5)
&(13,5,10)
&(13,14,0)
&(12,9,7)
&(12,6,10)
&(5,14,11)\\
(12,1,4)
&(8,9,1)
&(10,9,15)
&(10,5,7)
&(10,3,0)
&(14,15,9)
&(10,6,14)\\
(15,0,5)
&(1,7,15)
&(8,0,14)
&(11,9,6)
&(13,9,4)
&(0,13,2)
&(2,9,14)\\
(9,5,0)
&(4,6,9)
&(4,12,0)
&(1,5,13)
&(15,3,10)
&(15,2,11)
&(11,15,4)\\
(3,15,1)
&(0,8,15)
&(12,14,2)
&(11,3,14)
&(11,5,8)
&(7,14,3)
&(4,8,11)\\
(0,12,11)
&(11,0,7)
&(7,1,12)
\end{array}
\]
\end{example}

\begin{example}
\label{DTS18.ex} A $\DTS(18)$ that has no $18$-good sequencing.
\[
\begin{array}{l@{\hspace{.06in}}l@{\hspace{.06in}}l@{\hspace{.06in}}l@{\hspace{.06in}}l@{\hspace{.06in}}l@{\hspace{.06in}}l}
(1,2,3)
&(4,3,2)
&(3,4,5)
&(6,5,4)
&(5,6,2)
&(7,2,6)
&(2,7,8)\\
(3,8,7)
&(8,3,6)
&(2,1,0)
&(6,0,1)
&(0,6,3)
&(6,11,16)
&(9,0,17)\\
(11,15,3)
&(10,12,8)
&(3,15,11)
&(15,14,13)
&(17,15,2)
&(11,7,4)
&(5,16,13)\\
(10,2,17)
&(17,3,14)
&(0,9,10)
&(12,13,2)
&(13,1,8)
&(2,4,10)
&(7,10,3)\\
(12,7,0)
&(6,9,7)
&(12,15,10)
&(1,17,16)
&(8,2,13)
&(2,9,16)
&(2,12,5)\\
(7,11,13)
&(12,17,6)
&(13,11,9)
&(16,9,8)
&(6,17,13)
&(16,11,2)
&(13,12,3)\\
(10,13,5)
&(13,0,16)
&(4,0,11)
&(3,0,13)
&(11,5,1)
&(17,1,7)
&(8,1,11)\\
(15,7,1)
&(16,7,14)
&(14,5,0)
&(9,15,4)
&(14,11,6)
&(14,16,3)
&(4,1,15)\\
(0,14,7)
&(10,4,9)
&(10,14,1)
&(9,5,3)
&(9,13,14)
&(1,13,10)
&(17,10,11)\\
(14,8,10)
&(7,15,9)
&(17,0,5)
&(11,10,0)
&(5,10,7)
&(5,11,12)
&(0,8,12)\\
(2,11,14)
&(16,17,4)
&(15,16,12)
&(8,0,4)
&(5,14,15)
&(4,8,16)
&(16,1,5)\\
(3,10,16)
&(6,15,8)
&(6,12,14)
&(4,17,12)
&(13,4,7)
&(3,12,1)
&(16,6,10)\\
(0,2,15)
&(9,1,6)
&(10,15,6)
&(1,4,14)
&(14,12,4)
&(7,12,16)
&(13,15,17)\\
(3,17,9)
&(16,15,0)
&(7,5,17)
&(14,9,2)
&(8,14,17)
&(8,15,5)
&(4,13,6)\\
(11,17,8)
&(5,8,9)
&(1,9,12)
&(12,9,11)
\end{array}
\]
\end{example}

\section{Existence of \DTS$(v)$ without $v$-good sequencings}
\label{nonsequenceable.sec}

Let $\K^* = \{v \geq 3: \text{there exists a \DTS$(v)$ having no  $v$-good sequencing}\}$.
In this section, we prove that $v \in \K^*$ 
for all $v \equiv 0,1 \bmod 3$, $v \geq 7$.

We summarize  results from Sections \ref{computation.sec} and \ref{algorithms.sec} in the following lemma.

\begin{lemma}
\label{small.lem}
$3,4,6 \not\in \K^*$ and $7,9,10,12,13,16,18 \in \K^*$.
\end{lemma}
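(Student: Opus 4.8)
The plan is to assemble this summary lemma directly from the enumerations of Section~\ref{computation.sec} and the explicit designs of Section~\ref{algorithms.sec}; no new combinatorial argument is required, so the proof is essentially bookkeeping. I would begin with the three nonmembership claims $3,4,6 \not\in \K^*$. Unwinding the definition of $\K^*$, the statement $v \not\in \K^*$ asserts that \emph{every} \DTS$(v)$ has a $v$-good sequencing. Since the property of admitting a $v$-good sequencing is invariant under relabelling the points, it is enough to check a single representative of each isomorphism class, and this is exactly what Section~\ref{computation.sec} records: the unique \DTS$(3)$, all three nonisomorphic \DTS$(4)$, and all $32$ nonisomorphic \DTS$(6)$ are shown to possess good sequencings. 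Hence none of $3,4,6$ lies in $\K^*$.

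Next I would establish the positive claims, where for each listed order it suffices to produce a single \DTS$(v)$ with no $v$-good sequencing. For $v=7$, Section~\ref{computation.sec} already exhibits four such designs; moreover the theorem opening Section~\ref{algorithms.sec} gives a self-contained proof that \D{7}{4.}{926} has no $7$-good sequencing, so $7 \in \K^*$. For $v \in \{9,10,12,13,16,18\}$ I would invoke Lemma~\ref{Shannon.lem}, which guarantees that any \DTS$(v)$ containing its twelve listed triples cannot be $v$-good-sequenced, together with Examples~\ref{DTS9.ex}--\ref{DTS18.ex}, each of which displays a \DTS$(v)$ of the appropriate order. Combining the two then places every such $v$ in $\K^*$.

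The single point that must actually be verified, rather than merely cited, is that the twelve triples of Lemma~\ref{Shannon.lem} genuinely appear among the triples of each of Examples~\ref{DTS9.ex}--\ref{DTS18.ex}; this is a routine inspection, since those twelve triples occur as the first twelve entries listed in every one of the six examples. I therefore expect no real obstacle in the argument itself: all of the substantive work is already carried out upstream, in the exhaustive search behind Section~\ref{computation.sec} and in the case analysis proving Lemma~\ref{Shannon.lem}, and the hill-climbing procedure of Section~\ref{algorithms.sec} was tailored precisely to embed those twelve initial triples into a complete \DTS$(v)$ for each target order.
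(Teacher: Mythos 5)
Your proposal is correct and follows exactly the paper's intent: the lemma is stated without a separate proof precisely because it is a summary of the computational results of Section~\ref{computation.sec} (for $3,4,6 \not\in \K^*$ and $7 \in \K^*$) together with Lemma~\ref{Shannon.lem} and Examples~\ref{DTS9.ex}--\ref{DTS18.ex} (for $9,10,12,13,16,18 \in \K^*$), which is the assembly you carry out. Your two added observations---that isomorphism-invariance justifies checking only class representatives, and that the twelve initial triples must be (and are) visibly present as the first twelve entries of each example---are accurate and make the bookkeeping explicit.
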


Suppose $(Y,\B)$ is a \DTS$(w)$ and  $(X,\A)$ is a \DTS$(v)$. We say that  $(Y,\B)$ is a \emph{subdesign} of   $(X,\A)$  if $Y \subseteq X$ and $\B \subseteq \A$.
The following lemma is obvious. 

\begin{lemma}
\label{subdesign.lemma}
Suppose that a \DTS$(w)$ that does not have $w$-good sequencing is a 
subdesign of a \DTS$(v)$. Then the \DTS$(v)$
does not have $v$-good sequencing.
\end{lemma}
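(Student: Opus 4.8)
The plan is to argue by contraposition: I would show that any $v$-good sequencing of the larger design $(X,\A)$ restricts to a $w$-good sequencing of the subdesign $(Y,\B)$, which is impossible by hypothesis. First I would recall that, by the definition of a $v$-good sequencing, ``$v$ consecutive points'' is simply all of $X$, so the window (containment) condition is automatically satisfied and the only real requirement is on the ordering. Concretely, a $v$-good sequencing amounts to a total order $<$ on the point set $X$ with the property that no triple $(x,y,z)\in\A$ satisfies $x<y<z$.

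Next, assuming for contradiction that $(X,\A)$ \emph{does} have a $v$-good sequencing, let $<$ denote the associated total order on $X$. Restricting $<$ to the subset $Y\subseteq X$ yields a total order on the $w$ points of $Y$, that is, a sequencing $\pi|_Y$ of $(Y,\B)$. The key observation is that the relative order of any two points of $Y$ is identical whether it is read off from $\pi$ or from $\pi|_Y$, so passing to the restriction does not disturb any comparison among points of $Y$.

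Finally, since $\B\subseteq\A$, every triple of $\B$ is already a triple of $\A$, so no triple $(x,y,z)\in\B$ has $x<y<z$ under $<$; by the preceding observation the same holds under the restricted order. Hence $\pi|_Y$ is a $w$-good sequencing of $(Y,\B)$, contradicting the assumption that $(Y,\B)$ has none. This forces $(X,\A)$ to have no $v$-good sequencing, as claimed.

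There is no genuine obstacle in this argument—it is a plain restriction of a total order—so the only point that deserves a moment's care is the reduction carried out in the first step: verifying that for a $v$-good sequencing the single window of $v$ consecutive points covers all of $X$, so that the ordering condition alone survives and no separate ``window'' bookkeeping is needed when one restricts to the subdesign on $w<v$ points.
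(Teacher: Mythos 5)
Your proof is correct: the paper simply declares this lemma ``obvious'' and gives no written proof, and your argument---observing that a $v$-good sequencing is just a total order on $X$ avoiding increasing triples of $\A$, then restricting that order to $Y$ and using $\B\subseteq\A$---is precisely the intended justification. The only point needing care, the reduction of the window condition to a pure ordering condition when $\ell=v$, is handled correctly in your first step.
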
 

\begin{theorem} 
\label{subdesign.thm}
Let $L = \{v \geq 3: v \equiv 0,1 \bmod 3\}$. Suppose $(X,\B)$ is a $(v,L)$-PBD and
suppose there exists a block $B_0 \in \B$ in the PBD such that $|B_0| \in \K^*$.
Then $v \in \K^*$.
\end{theorem}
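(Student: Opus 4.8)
The plan is to reuse the PBD construction from the proof of Theorem \ref{PBDclosed.thm}, but now combine it with the distinguished non-sequenceable block $B_0$ and the subdesign criterion of Lemma \ref{subdesign.lemma}. First I would build a \DTS$(v)$ on the point set $X$ as follows. For each block $B \in \B$, we have $|B| \in L$, so a \DTS$(|B|)$ exists; by relabelling its points we obtain a directed triple system of the form $(B, \A_B)$ on the point set $B$. For the distinguished block $B_0$, since $|B_0| \in \K^*$ there is a \DTS$(|B_0|)$ having \emph{no} $|B_0|$-good sequencing, and I would use (a relabelling of) that particular system as $(B_0, \A_{B_0})$. Then set
\[
\A = \bigcup_{B \in \B} \A_B .
\]

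Next I would verify that $(X, \A)$ is a \DTS$(v)$, which is the same routine check as in Theorem \ref{PBDclosed.thm}. For any ordered pair of distinct points $(a,b)$, the unordered pair $\{a,b\}$ lies in exactly one block $B$ of the PBD, and hence the directed edge $(a,b)$ is covered exactly once, namely by the unique triple of $\A_B$ containing it. Thus every directed edge of $\vec{K_v}$ occurs in precisely one triple of $\A$, so $(X,\A)$ is indeed a \DTS$(v)$. Note that this covering property is insensitive to the internal structure of the system placed on each block, so using the non-sequenceable design on $B_0$ causes no difficulty.

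Finally, by construction $B_0 \subseteq X$ and $\A_{B_0} \subseteq \A$, so $(B_0, \A_{B_0})$ is a subdesign of $(X, \A)$, and it has no $|B_0|$-good sequencing by our choice. Applying Lemma \ref{subdesign.lemma} immediately yields that $(X,\A)$ has no $v$-good sequencing, and therefore $v \in \K^*$. The argument is essentially a bookkeeping combination of two facts already established, so I do not anticipate any serious obstacle; the only point deserving care is confirming that the union $\A$ really forms a \DTS$(v)$ (i.e.\ that each directed edge is covered exactly once) when one of the constituent systems is chosen to be non-sequenceable, which follows exactly as before from the defining property that each unordered pair of a PBD lies in a unique block.
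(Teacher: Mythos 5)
Your proposal is correct and follows essentially the same route as the paper's own proof: replace each block of the PBD by a \DTS{} of the appropriate order, choosing a non-sequenceable system on $B_0$, and conclude via Lemma \ref{subdesign.lemma}. The only difference is that you spell out the routine verification that each directed edge is covered exactly once, which the paper leaves implicit.
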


\begin{proof}
Replace every block $B$ of the PBD by a \DTS$(|B|)$. For the block  $B_0$, fill in a 
\DTS$(|B_0|)$ that does not have a $|B_0|$-good squencing. 
The result  follows from Lemma \ref{subdesign.lemma}.
\end{proof}

\begin{corollary} 
\label{1,3mod6.cor}
Suppose $v \equiv 1,3 \bmod 6$, $v \geq 7$. Then $v \in \K^*$.
\end{corollary}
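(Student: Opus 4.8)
The plan is to apply Theorem~\ref{subdesign.thm} with a block of size $7$, after disposing of the handful of small orders directly. Since $v \equiv 1,3 \bmod 6$ is precisely the spectrum of orders for which Steiner triple systems exist, the natural source of $(v,L)$-PBDs in this regime is the class of incomplete Steiner triple systems. Indeed, a PBD on $v$ points consisting of one block of size $7$ together with blocks of size $3$ is exactly an $\STS(v)$ with a hole of size $7$: take an $\STS(v)$ containing a sub-$\STS(7)$ and coalesce the seven triples of the subsystem into a single block equal to its seven-point support. Observe that $3,7 \in L$ (as $3 \equiv 0$ and $7 \equiv 1 \bmod 3$) and, by Lemma~\ref{small.lem}, $7 \in \K^*$, so such a design is precisely the input that Theorem~\ref{subdesign.thm} requires.

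First I would isolate the orders below $15$. The orders $v \equiv 1,3 \bmod 6$ with $7 \le v < 15$ are exactly $v = 7, 9, 13$, and all three lie in $\K^*$ directly by Lemma~\ref{small.lem}. It therefore remains to treat $v \equiv 1,3 \bmod 6$ with $v \ge 15$.

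For these orders I would invoke the Doyen--Wilson theorem, which guarantees the existence of an $\STS(v)$ containing a sub-$\STS(7)$ whenever $v \equiv 1,3 \bmod 6$ and $v \ge 2\cdot 7 + 1 = 15$. Deleting the seven blocks of the subsystem and replacing them by the single block equal to their seven-point support yields a $(v,\{3,7\})$-PBD in which at least one block, say $B_0$, has $|B_0| = 7 \in \K^*$. Since this is a $(v,L)$-PBD with a block whose size lies in $\K^*$, Theorem~\ref{subdesign.thm} gives $v \in \K^*$, completing the argument.

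The only genuine content is the existence of the required PBD for every $v \ge 15$ in these residue classes, and this is where I expect the crux to lie. Rather than building the designs explicitly, I would lean on the Doyen--Wilson subsystem theorem (equivalently, the handbook spectrum for incomplete triple systems), whose threshold $v \ge 2w+1$ with $w = 7$ dovetails exactly with the three exceptional orders $7,9,13$ already handled by Lemma~\ref{small.lem}. Everything else is routine: verifying $3,7 \in L$, checking that coalescing a subsystem into a single block produces a legitimate PBD on $v$ points, and confirming that the hypothesis $|B_0| \in \K^*$ of Theorem~\ref{subdesign.thm} is met.
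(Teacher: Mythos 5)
Your proposal is correct and follows essentially the same route as the paper: dispose of $v=7,9,13$ via Lemma~\ref{small.lem}, invoke Doyen--Wilson for $v\geq 15$ to embed an \STS$(7)$ in an \STS$(v)$, coalesce the subsystem into a single block of size $7$ to obtain a $(v,\{3,7\})$-PBD, and apply Theorem~\ref{subdesign.thm} using $7\in\K^*$. No gaps to report.
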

\begin{proof}
The values $v = 7, 9$   and $13$ are handled in Lemma \ref{small.lem}.
From the Doyen-Wilson Theorem \cite{DW}, 
there is an \STS$(v)$ that contains an \STS$(7)$ as a subdesign for all $v \geq 15$, $v \equiv 1,3 \bmod 6$. 
Replace the subdesign by a block of size 7, obtaining a $(v, \{3,7\})$-PBD that contains a (unique) block of size 7. Because $7 \in \K^*$, the result  follows from Theorem \ref{subdesign.thm}.
\end{proof}

\begin{corollary}
\label{0,4mod6.cor}
 Suppose $v \equiv 0,4 \bmod 6$, $v \geq 10$. Then $v \in \K^*$.
\end{corollary}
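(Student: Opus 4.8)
The plan is to imitate the structure of Corollary~\ref{1,3mod6.cor}, replacing its Doyen--Wilson input (which has no counterpart for even orders) by the doubling construction of Lemma~\ref{double2.lem}, reinterpreted as an embedding. The key observation is that the DTS$(2w+4)$ produced in the proof of Lemma~\ref{double2.lem} has the form $(X \cup Y, \mathcal{B} \cup \mathcal{C})$ and therefore contains the input design $(X,\mathcal{B})$ as a subdesign. This is true for \emph{any} DTS$(w)$ on $X$, irrespective of whether it admits a $w$-good sequencing; the sequencing hypothesis in the statement of Lemma~\ref{double2.lem} plays no role in the construction itself. Combining this with Lemma~\ref{subdesign.lemma} yields the implication I need: if $w \in \K^*$, then $2w+4 \in \K^*$.

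First I would clear the base cases $v \in \{10,12,16,18\}$, all of which belong to $\K^*$ by Lemma~\ref{small.lem}. Then I would argue by strong induction on $v$, treating $v \equiv 0,4 \bmod 6$ with $v \geq 22$. Given such a $v$, put $w = (v-4)/2$, so that $v = 2w+4$. A quick residue check shows $w$ is admissible: if $v \equiv 0 \bmod 6$ then $w \equiv 1 \bmod 3$, while if $v \equiv 4 \bmod 6$ then $w \equiv 0 \bmod 3$; furthermore $9 \leq w < v$.

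The reduction now splits on the parity of $w$. If $w$ is odd, then $w \equiv 1,3 \bmod 6$ with $w \geq 9$, so $w \in \K^*$ by Corollary~\ref{1,3mod6.cor}. If $w$ is even, then $w \equiv 0,4 \bmod 6$ with $10 \leq w < v$, so $w \in \K^*$ either directly (when $w \in \{10,12,16,18\}$) or by the induction hypothesis. In both cases I take a DTS$(w)$ with no $w$-good sequencing, apply the construction of Lemma~\ref{double2.lem} to embed it as a subdesign of a DTS$(v)$, and invoke Lemma~\ref{subdesign.lemma} to conclude $v \in \K^*$. This closes the induction and, together with Corollary~\ref{1,3mod6.cor}, completes the proof that every admissible $v \geq 7$ lies in $\K^*$.

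I expect essentially all the substance to be in the bookkeeping of the previous two paragraphs: verifying that $w = (v-4)/2$ is always admissible and satisfies $9 \leq w < v$, and checking that every reduced value $w$ has already been established (odd $w$ by Corollary~\ref{1,3mod6.cor}, even $w$ by the four base cases or induction). The one genuinely conceptual step, and the easiest to overlook, is that Lemma~\ref{double2.lem} must be read as producing a subdesign: since its construction never uses sequenceability of the input, it is exactly the tool that transports non-sequenceability upward, just as Doyen--Wilson transported an STS$(7)$ subdesign in the odd case.
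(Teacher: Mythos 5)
Your proof is correct, but it takes a genuinely different route from the paper. The paper handles $v \geq 22$ with a single non-inductive design-theoretic step: it writes $v = 3m+t$ with $m \equiv 1,3 \bmod 6$, $m \geq 7$, $t \in \{1,3,7,9\}$, truncates a transversal design TD$(4,m)$ to obtain a $(v,\{3,4,m,t\})$-PBD containing a block of size $m$, and then invokes Theorem \ref{subdesign.thm} together with Corollary \ref{1,3mod6.cor}. You instead run a strong induction via the doubling map $w \mapsto 2w+4$, and your key observation --- that the construction in Lemma \ref{double2.lem} never uses the sequencing hypothesis and produces $(X \cup Y, \mathcal{B}\cup\mathcal{C})$ with $(X,\mathcal{B})$ as a subdesign in the sense the paper defines, so Lemma \ref{subdesign.lemma} transports non-sequenceability upward --- is valid: the triples in $\mathcal{C}$ cover all edges incident with $Y$ regardless of any property of $(X,\mathcal{B})$. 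Your bookkeeping also checks out: for $v \geq 22$, $w = (v-4)/2$ satisfies $9 \leq w < v$ and $w \equiv 0,1 \bmod 3$; odd $w$ gives $w \equiv 1,3 \bmod 6$, $w \geq 9$, covered by Corollary \ref{1,3mod6.cor}, while even $w$ gives $w \equiv 0,4 \bmod 6$, $w \geq 10$, covered by the base cases or the induction hypothesis (and the base cases $10, 12, 16$ are genuinely irreducible in your scheme, since halving them lands on $3, 4, 6 \notin \K^*$ by Lemma \ref{small.lem}, so you are right to take all of Lemma \ref{small.lem} as base cases). What each approach buys: the paper's argument is uniform and avoids induction within the even class, at the cost of importing transversal designs and the PBD machinery of Theorem \ref{subdesign.thm}; yours is more self-contained, reusing only the paper's own doubling construction and the trivial subdesign lemma, at the cost of an induction and of the (easy but necessary) explicit re-reading of Lemma \ref{double2.lem} as an embedding result rather than a sequencing result.
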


\begin{proof}
The values $v = 10,12,16$   and $18$ are handled in Lemma \ref{small.lem}.
For  $v \equiv 0,4 \bmod 6$, $v \geq 22$, write $v$ in the form  $v = 18s+r$, 
where  $r \in \{4,6,10,12,16,18\}$ and $s \geq 1$.
Because $v \geq 22$, we observe that
\begin{eqnarray*} 
18s +4 & = & 3 (6s+1) + 1\\
18s +6 & = & 3 (6s+1) + 3\\
18s +10& = & 3 (6s+3) + 1\\
18s +12& = & 3 (6s+3) + 3\\
18s +16& = & 3 (6s+3) + 7\\
18s +18 & = & 3 (6s+3) + 9.
\end{eqnarray*}
The first few equations in this series are $22 = 3 \times 7 + 1$, $24 = 3 \times 7 + 3$, $28 = 3 \times 9 + 1$
and $30 = 3 \times 9 + 3$.
Thus, it is clear that we can express $v$ in the form $v = 3m+t$, where 
$m \equiv 1,3 \bmod 6$, $m \geq 7$, $m \geq t$ and  and $t \in \{1,3,7,9\}$. 

Now, take a transversal design TD$(4,m)$ (see \cite{CD}) and delete $m-t$ points from one group. 
This gives rise to a $(v,\{ 3,4,m,t\})$-PBD that contains a block of size $m$. 
Corollary \ref{1,3mod6.cor} proves that $m \in \K^*$, so the 
desired result follows from Theorem \ref{subdesign.thm}.
\end{proof}

Summarizing the results proven in Corollaries \ref{1,3mod6.cor} and \ref{0,4mod6.cor} and Lemma \ref{small.lem}, we have the following.

\begin{theorem}
\label{main.thm}
Suppose $v \equiv 0 ,1 \bmod 3$, $v \geq 3$. Then $v \not\in \K^*$ if 
$v = 3,4$ or $6$ and $v \in \K^*$ if $v \geq 7$.
\end{theorem}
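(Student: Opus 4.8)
The plan is to prove the theorem by assembling the three previously established results---Lemma~\ref{small.lem}, Corollary~\ref{1,3mod6.cor}, and Corollary~\ref{0,4mod6.cor}---into a single statement covering every admissible order. The negative direction is immediate: Lemma~\ref{small.lem} asserts $3,4,6 \notin \K^*$, which is precisely the claim that every \DTS$(3)$, \DTS$(4)$, and \DTS$(6)$ admits a good sequencing, as confirmed by the unique \DTS$(3)$ and by the enumerations for $v=4,6$. So the only substantive direction to organize is the positive one for $v \geq 7$.

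For the positive direction, first I would note that the hypothesis $v \equiv 0,1 \bmod 3$ partitions, modulo $6$, into the four residue classes $v \equiv 0,1,3,4 \bmod 6$: the condition $v \equiv 0 \bmod 3$ gives $v \equiv 0$ or $3 \bmod 6$, and $v \equiv 1 \bmod 3$ gives $v \equiv 1$ or $4 \bmod 6$. I would then split into two cases according to which corollary applies. For $v \equiv 1,3 \bmod 6$ with $v \geq 7$, Corollary~\ref{1,3mod6.cor} gives $v \in \K^*$ directly. For $v \equiv 0,4 \bmod 6$, the smallest admissible order exceeding $6$ is $v = 10$ (on the class $v \equiv 0 \bmod 6$ the value $6$ is excluded and the next order is $12$, while $v \equiv 4 \bmod 6$ starts at $10$), so the constraint $v \geq 7$ coincides with $v \geq 10$ on these classes and Corollary~\ref{0,4mod6.cor} applies. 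Together the two corollaries cover all of $v \equiv 0,1 \bmod 3$, $v \geq 7$, which completes the argument.

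In the form stated, the theorem is essentially a bookkeeping consolidation, so I do not expect a genuine obstacle at this stage; the only care required is to verify that the residue-class partition is exhaustive and that the thresholds $v \geq 7$ and $v \geq 10$ in the two corollaries match up on their respective classes. The real difficulty of the overall result lies upstream, in the ingredients I am permitted to cite: establishing the seed membership $7,9,10,12,13,16,18 \in \K^*$ in Lemma~\ref{small.lem}, which rests on the exhaustive computation for \DTS$(7)$ together with the twelve-triple non-sequenceable configuration of Lemma~\ref{Shannon.lem} realized inside hill-climbed designs; and the recursive lifts---the Doyen--Wilson embedding of an \STS$(7)$ for $v \equiv 1,3 \bmod 6$, and the truncated transversal-design/PBD construction for $v \equiv 0,4 \bmod 6$---that propagate these seeds through Theorem~\ref{subdesign.thm} to all large orders. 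Were those results not already in hand, the principal obstacle would be arranging the recursive constructions so that every target order is guaranteed to contain a subdesign of one of the finitely many known non-sequenceable orders.
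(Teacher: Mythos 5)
Your proposal is correct and matches the paper's proof, which is exactly this consolidation: the paper derives the theorem by ``summarizing'' Lemma~\ref{small.lem} (for $3,4,6 \notin \K^*$ and the seed values) together with Corollaries~\ref{1,3mod6.cor} and~\ref{0,4mod6.cor}, whose residue classes $v \equiv 1,3 \bmod 6$ and $v \equiv 0,4 \bmod 6$ partition the admissible orders just as you describe. Your check that $v \geq 7$ forces $v \geq 10$ on the classes $v \equiv 0,4 \bmod 6$ is the only detail needing verification, and you handled it correctly.
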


\section{Discussion and summary}

An interesting open question is if there is an efficient (i.e., polynomial-time) algorithm (perhaps using 
the ideas discussed in Section \ref{algorithms.sec}) to test if a given
\DTS$(v)$ has a $v$-good sequencing.

It would also be of interest to determine the proportion of \DTS$(v)$ having a $v$-good sequencing among all
\DTS$(v)$ of a given order $v$. We ask if this ratio approaches $1$ as $v$ increases.  

Even for $v=9$, there are too many nonisomorphic designs to test them all. However, we did generate $10000$ \DTS$(9)$ using our hill-climbing algorithm, and we determined that all but one of them has a $9$-good sequencing (this exceptional design has an $8$-good sequencing).  For $v=10$, we again generated $10000$ designs  using our hill-climbing algorithm, and we found that they all have a $10$-good sequencing. 

This suggests the following question: Does every \DTS$(v)$ have either a $v$-good sequencing or a $(v-1)$-good sequencing?

\end{document}